\numberwithin{equation}{section}
\theoremstyle{plain}                
\newtheorem{theorem}{Theorem}[section]
\newtheorem{proposition}[theorem]{Proposition}
\theoremstyle{remark}
\newtheorem{assumption}[theorem]{Assumption}
\newtheorem{remark}[theorem]{Remark}
\providecommand{\alias}{}
\renewcommand{\alias}[1]{\providecommand{#1}{}\renewcommand{#1}}
\DeclarePairedDelimiter\ab{\langle}{\rangle} 
\DeclarePairedDelimiter\abs{\lvert}{\rvert}   
\DeclarePairedDelimiter\norm{\lVert}{\rVert}  
\DeclarePairedDelimiter\bkt{[}{]}             
\DeclarePairedDelimiter\brc{\{}{\}}           
\DeclarePairedDelimiter\prn{(}{)}             
\providecommand\giv{}
\newcommand\given\giv 
\DeclarePairedDelimiterX\set[1]\{\}{ #1 }
\DeclarePairedDelimiterX\sets[2]\{\}{ #1\,:\,#2 }
\DeclarePairedDelimiterXPP\pp[1]{\mathbb{P}}[]{}{
   \renewcommand\giv{\nonscript\:\delimsize\vert\nonscript\:\mathopen{}}
   #1}
\DeclarePairedDelimiterXPP\ppup[2]{\mathbb{P}^{#1}}[]{}{
   \renewcommand\giv{\nonscript\:\delimsize\vert\nonscript\:\mathopen{}}
   #2}
\DeclarePairedDelimiterXPP\ppdown[2]{\mathbb{P}_{#1}}[]{}{
   \renewcommand\giv{\nonscript\:\delimsize\vert\nonscript\:\mathopen{}}
   #2}
\DeclarePairedDelimiterXPP\ee[1]{\mathbb{E}}[]{}{
   \renewcommand\giv{\nonscript\:\delimsize\vert\nonscript\:\mathopen{}}
   #1}
\DeclarePairedDelimiterXPP\eeup[2]{\mathbb{E}^{#1}}[]{}{
   \renewcommand\giv{\nonscript\:\delimsize\vert\nonscript\:\mathopen{}}
   #2}
\DeclarePairedDelimiterXPP\eedown[2]{\mathbb{E}_{#1}}[]{}{
   \renewcommand\giv{\nonscript\:\delimsize\vert\nonscript\:\mathopen{}}
   #2}
\let\preexp\exp
\let\exp\relax
\DeclarePairedDelimiterXPP\exp[1]{\preexp}(){}{#1}
\newcommandx{\prf}[2][2=t\in\bkt{0,T}]{ \set{#1}_{#2}}
\newcommandx{\seq}[2][2=n\in\N]{\set{#1}_{#2}}
\newcommandx{\fml}[2]{\set{#1}_{#2}}
\newcommandx{\tol}[1]{\stackrel{#1}{\longrightarrow}}
\newcommandx{\eqd}[1][1=d]{\stackrel{(#1)}{=}}
\newcommandx{\neqd}[1][1=d]{\stackrel{(#1)}{\neq}}
\let\oldabs\abs \def\abs{\@ifstar{\oldabs}{\oldabs*}}
\let\oldab\ab \def\ab{\@ifstar{\oldab}{\oldab*}}
\let\oldnorm\norm \def\norm{\@ifstar{\oldnorm}{\oldnorm*}}
\let\oldbkt\bkt \def\bkt{\@ifstar{\oldbkt}{\oldbkt*}}
\let\oldbrc\brc \def\brc{\@ifstar{\oldbrc}{\oldbrc*}}
\let\oldprn\prn \def\prn{\@ifstar{\oldprn}{\oldprn*}}
\let\oldpp\pp \def\pp{\@ifstar{\oldpp}{\oldpp*}}
\let\oldppup\ppup \def\ppup{\@ifstar{\oldppup}{\oldppup*}}
\let\oldppdown\ppdown \def\ppdown{\@ifstar{\oldppdown}{\oldppdown*}}
\let\oldee\ee \def\ee{\@ifstar{\oldee}{\oldee*}}
\let\oldeeup\eeup \def\eeup{\@ifstar{\oldeeup}{\oldeeup*}}
\let\oldeedown\eedown \def\eedown{\@ifstar{\oldeedown}{\oldeedown*}}
\let\oldexp\exp \def\exp{\@ifstar{\oldexp}{\oldexp*}}
\alias{\R}{{\mathbb R}}
\alias{\C}{{\mathbb C}}
\alias{\Z}{{\mathbb Z}}
\alias{\N}{{\mathbb N}}
\alias{\Nz}{{\mathbb N}_0}
\DeclareMathOperator\Id{Id}
\newcommand{\RN}[2]{\frac{d #1}{d #2}}
\newcommand{\tRN}[2]{\tfrac{d #1}{d #2}}
\newcommand{\oo}[1]{\frac{1}{#1}}
\newcommand{\too}[1]{\tfrac{1}{#1}}
\newcommand{\tot}{\too{2}}
\newcommand{\inds}[1]{ 1_{\set{#1}}} 
\newcommand{\upto}{\nearrow}
\newcommand{\tint}{\textstyle\int}
\newcommand{\eps}{\varepsilon}
\newcommand{\ld}{\lambda}
\newcommand{\sB}{\mathcal{B}}
\newcommand{\sF}{\mathcal{F}} \newcommand{\bF}{\mathbb{F}}
 \newcommand{\bL}{\mathbb{L}}
\newcommand{\sM}{\mathcal{M}}
\newcommand{\sP}{\mathcal{P}} \newcommand{\bP}{\mathbb{P}}
 \newcommand{\bQ}{\mathbb{Q}}
\newcommand{\lone}{\bL^1}
\newcommand{\ltwo}{\bL^2}
\newcommand{\lpee}{\bL^p}
\newcommand{\lque}{\bL^q}
\newcommandx{\pds}[2][1= ]{\frac{\partial #1}{\partial #2}}
\newcommandx{\tpds}[2][1= ]{\tfrac{\partial #1}{\partial #2}}
\newcommand{\eas}{\text{ a.s.}}
\newcommand{\efor}{\text{ for }}
\newcommand{\eforall}{\text{ for all }}
\newcommand{\eforeach}{\text{ for each }}
\newcommand{\eand}{\text{ and }}
\newcommand{\ewhere}{\text{ where }}
\newcommand{\ewith}{\text{ with }}
\newcommand{\itos}{It\^ o's}
\newcommand{\tbe}{\tilde{\beta}}
\newcommand{\tA}{\tilde{A}}
\newcommand{\sMl}{\sM^{loc}}
\newcommand{\MQ}{M^{\bQ}}
\newcommand{\MPx}{M^{\bP,\xi}}
\newcommand{\MP}{M^{\bP}}
\newcommand{\loz}{\bL^{1,0}}
\newcommand{\ltz}{\bL^{2,0}}
\newcommand{\lto}{\bL^{2,1}}
\newcommand{\lpq}{\bL^{p,q}}
\newcommand{\lpo}{\bL^{p,1}}
\newcommand{\hbeta}{\hat{\beta}}
\newcommand{\hbe}{\hat{\beta}}
\newcommand{\eeq}[1]{\eeup{\bQ}{#1}}
\newcommand{\eeqst}[1]{\eeup*{\bQ}{#1}}
\newcommand{\eep}[1]{\eeup{\bP}{#1}}
\newcommand{\un}{u^{(n)}}
\DeclareMathOperator{\Prog}{Prog}
\begin{document}
\title{Representation of random variables as   Lebesgue integrals}

\author{Sara Biagini}
\thanks{The authors would like to thank Fausto Gozzi for
    suggesting some related literature. They are also grateful to
    the anonymous referees, and the Editors for a number of
constructive comments that improved the quality of this paper.}
\address{Sara Biagini, Department of Economics and Finance,
LUISS University, Rome, Italy}

\author{Gordan \v{Z}itkovi\'{c}}
\address{Gordan \v{Z}itkovi\'{c}, Department of Mathematics, University of Texas at Austin,
Texas, US }

\maketitle

\begin{abstract}
We study representations of a random variable $\xi$
as an integral of an adapted process with respect to the Lebesgue measure.
The existence of such representations in two different regularity classes
is characterized in terms of  the quadratic variation of  (local)
martingales closed by $\xi$.
\end{abstract}


\section{Introduction}
This paper focuses on the representation of a random variable as an
adapted Lebesgue - as opposed to stochastic -  integral. We start the analysis with
statements of our main results and then place them in the extant
literature while  offering motivation for their study.\\
\indent Let $(\Omega, (\mathcal{F}_t)_{t\leq T}, \sF,\bP)$ be a filtered
probability space. Given an $\sF_T$-mea\-su\-ra\-ble random variable $\xi$, we
ask whether there exists a progressively-measurable process $\beta$
such
that
\begin{align}\label{equ:ac-repr}
    \xi = \int_0^T \beta_u\, du, \eas
\end{align}
with $\beta$ in a given integrability class. We focus on the Lebesgue
measure on a finite time-horizon $[0,T]$ because other settings
(alternative measures instead of the Lebesgue measure, alternative
horizons, or the discrete time on an infinite horizon instead of the
continuous time) lead to a similar analysis.
\\
\indent Our main results apply to two integrability classes for $\beta$, but we
discuss interesting features of some other classes, too, in Section
\ref{sec:examples}.
We say that $\beta$ is
\emph{weakly regular} if $$\int_0^T \beta_u^2\, du < \infty \ \   a.s. ,$$  and
\emph{strongly regular} if $$\ee{ \int_0^T \beta_u^2\, du}<\infty.$$
Assuming throughout that all  {$\sF$}-local martingales are continuous, we show in Theorem
\ref{thm:main-strong} that the representation \eqref{equ:ac-repr} holds for
some strongly regular $\beta$ if and only if $\xi \in \lone$ and
\begin{align*}
   \ee{\int_0^T \oo{T-t}\, d\ab{M}_t}<\infty
   \ewhere M_t = \ee{\xi \giv \sF_t}.
\end{align*}
In a less restrictive, weakly regular case, our Theorem \ref{thm:main-weak}
states that \eqref{equ:ac-repr} holds for a weakly regular $\beta$ if
and only if there exists a probability measure $\bQ$ equivalent to $\bP$
and a $\bQ$ local martingale $M$ with $M_T=\xi$ such that
\begin{align}
   \label{equ:slow}
   \int_0^T \oo{T-t}\, d\ab{M}_t<\infty,\eas
\end{align}
Intuitively, an absolutely continuous representation of the form
\eqref{equ:ac-repr} with a weakly regular $\beta$ exists if and only if
$\xi$ closes a local martingale whose quadratic variation grows slowly
enough at $T$. This problem has a interesting link with  the so-called ``fundamental theorem of asset pricing''
(see Theorem 1.1, p.~487 of \cite{DelSch94}). As is well known in the Mathematical Finance community, this Theorem
states that a locally-bounded semimartingale $M$ is a
local martingale under some measure $\bQ$ equivalent to $\bP$ if and
only if it satisfies the condition  {of No Free Lunch with Vanishing Risk (NFLVR in the sequel)}. NFLVR is a slightly stronger version of
the classical NA (No Arbitrage) condition of Mathematical Finance. We may think, informally,
 of a process that
satisfies NFLVR as a  {measure-free version of a local martingale,}   or,
similarly, as a semimartingale whose local-martingale part is everywhere
 more active  than its finite-variation part. \\
\indent  When focusing on the representation \eqref{equ:ac-repr} of $\xi$ under the
weaker, probability-free, condition on $\beta$, that question boils down to the
relationship between $\xi$, the set of null events, and the filtration. Rephrased in financial terms, what we show is that  \eqref{equ:ac-repr} holds if and only if $\xi$ closes a price process which has the  property and moreover is a  ``slow"
local martingale under a suitable $\bQ$ - in the sense of \eqref{equ:slow}. Such ``slow" local martingale
that converges to $\xi$ can be used as a proxy for the
good approximability of $\xi$ by $\sF_t$-adapted random variables as
$t\upto T$.\\ 
\indent 
Unlike in the case of martingale representation, the question of
uniqueness of an absolutely continuous representation admits a trivially
negative answer in many interesting integrability classes, including
both weak and strong regularity discussed above. That fact served as a
prompt to look for a canonical, rather than unique $\beta$. When
$\ee*{\int_0^T \beta_u^2\, du}<\infty$ is required, the $\beta$ that
minimizes   $\ee*{\int_0^T \beta_u^2\, du}$ admits an easy-to-verify
explicit form, namely
\[
      \hbe_t = \frac{1}{T} M_0 + \int_0^t \oo{T-u}\, dM_u, \  t\in [0,T),
      \]
where $M_t = \ee{\xi \giv \sF_t}$.  Unfortunately, we could not
identify an analogous natural notion of canonicity in the weakly regular
case.

Absolutely continuous representation issues  arise quite easily in
applications. For instance, in \cite{AidBia23} the authors deal with a
linear-quadratic stochastic control problem on the Wiener space, arising
from carbon regulation. In that problem, the controls are square
integrable \emph{rates}, i.e., state dynamics involve integrals of these
controls with respect to $dt$.
Furthermore, the objective function contains a
terminal penalty term which is a function of an integral of one of
the controls, $\beta$, so that the random variable  $\xi = \int_0^T
\beta_t\, dt$ appears in the objective function.  Since the problem is not
strictly convex in $\beta$, the authors of \cite{AidBia23} were
 only able  {to} obtain an explicit
expression for the optimal $\hat \xi$, and for the associated martingale
$\hat M_t = \ee*{ \hat \xi \giv \sF_t}$. They left the problem of finding an  optimal,
square integrable, rate
$\hat \beta$ that \emph{represents} the optimal $\hat \xi$
open (see \cite{AidBia23}, Remark 4.1).

Integrable-enough absolutely continuous representations come in handy in
other contexts, as well. For example,  they   provide  useful estimates
when proving existence of solutions to stochastic differential
equations. The interested reader can consult Chapter 6 of the \cite{FGS17}  
for a general treatment, or \cite{BGZ22}  for an application 
to stochastic delayed differential equations in an optimal investment problem.\\ 
\indent
The only existing result concerning absolutely-continuous representation
we are aware of is the ``factorization formula''
of Da Prato and Zabczyk (see Theorem 5.2.5, p.~58 in \cite{DaPZab96}).
Set on an abstract Wiener space, it provides an explicit
absolutely continuous representation of a random variable given by a
stochastic integral. It relies on a version of a stochastic Fubini
theorem (see Theorem  {4.18} of \cite{DapZab14}) but does not address the
regularity of the representation itself, or provide any necessary conditions.
A deeper discussion of why their approach, based on the stochastic
Fubini theorem, does not lead to the kinds of results we are interested in is
given in Remark \ref{rem:Fubini}.

Our results extend the existing ones in several directions. First, we
give \emph{necessary and sufficient} conditions on the random variable
$\xi$ for the representation to exist under both weak and strong
regularity. Furthermore, in the strongly regular case we show that the unique
\emph{martingale} solution  { of } the representation problem arises as the
$\ltwo$-norm minimizer on the product space.

The paper is organized as follows: Section 2 treats the strongly regular
and Section 3 the weakly regular case; Section 4 contains further
examples, results and comments.\\ 
\indent \textbf{Setup and notation.} We consider a measurable space $(\Omega,\sF)$,
together with a maximal family $\sP$ of mutually equivalent probability
measures on $\sF$, as well as a right-continuous filtration
$\bF=\prf{\sF_t}$,  { with $\sF_0$ $\sP$-trivial.}
 {When we write that a filtration is \emph{generated by a
    Brownian motion  {W}}, we
always have the usual right-continuous
and complete augmentation of the natural filtration in mind. On the
other hand, a filtration $\prf{\sF_t}$ is said to \emph{support} a Brownian
motion if there exists an
$\prf{\sF_t}$-Brownian motion $W$.}

{We say that a  process $M$ is a
\emph{$\sP$-local martingale}
if it is a local martingale under some $\bP\in\sP$, and we denote
the set of all $\sP$-local martingales by $\sMl$.
We impose the
following, standing, assumption throughout:
\begin{assumption}
   \label{asm:cont}
   Each $\sP$-local martingale is continuous.
\end{assumption}
 {In particular, the above assumption implies  that for each $M\in \sMl$ there exists a unique process $\langle M \rangle $ such that $M^2-\langle M \rangle $ is also in $ \sMl$.}
\begin{remark}
\label{rem:palet}
According to Theorem 5.38, p.~155 in \cite{HeWanYan92}, continuity
of all martingales on a filtered probability space is equivalent to
the requirement that all
$\prf{\sF_t}$-stopping times be predictable. Since this property
stays invariant under equivalent measure changes, we conclude that
Assumption \ref{asm:cont} holds if we only ask that there
\emph{exists} a single probability measure $\bP \in \sP$ such that all
$\bP$-local martingales are continuous. 
\end{remark}
 }

For $\bP \in \sP$, $\lpee(\bP)$ is a shorthand for $\lpee(\Omega,\sF,
\bP)$ while $\lpq(\bP)$,  $q \in [0,\infty)$  denotes the set of all
$\prf{\sF_t}$-predictable processes $\beta$ with $  \int_0^T
\abs{\beta_u}^p\, du \in \lque(\bP)$.
When $p\geq 1$, the space $\bL^{p,1}(\bP)$ comes
with the norm: $$\norm{\beta}_{\lpo(\bP)}=\mathbb{E}^{\bP}\left[ \ \int_0^T
\abs{\beta_u}^p\, du\right ] ^{1/p},$$ while no topology on $\bL^{p,0}(\bP)$
will be needed. Since the spaces $\bL^{0}(\bP), \bL^{p,0}(\bP)$, $\bP \in
\sP$ coincide, we omit the probability measure from the notation and
simply write $\bL^{0}, \bL^{p,0}$.

For $\xi \in \sF_T$ and $\bP \in \sP$, we set
\begin{align*}
  \sB^{p,q}(\xi,\bP) &:= \Big\{ \beta \in \lpq(\bP)\,:\,
     \tint_0^T \beta_u\, du = \xi\eas \Big\}.
\end{align*}
When $q=0$, we omit the measure $\bP$ and write only
$\sB^{p,0}(\xi)$.

\section{The strongly regular case}
In this section we choose and fix a probability measure $\bP \in \sP$
and use it as the underlying measure in all probabilistic
statements. In particular, we write:
$\ltwo$ and $\bL^{2,1}$
for $\ltwo(\bP)$ and $\bL^{2,1}(\bP)$ respectively;  {and $ \sB^{2,1}(\xi)$ in place of $\sB^{2,1}(\xi,\bP)$.}
\begin{theorem}
   \label{thm:main-strong}
   For $\xi\in\lone$,
   let $\{M_t\}_{t\in [0,T]}$
   and $\{\hbe_t\}_{t\in [0,T)}$ be defined by
   \begin{align}
      \label{equ:mb-m}
      M _t &= \ee{ \xi \giv \sF_t},\  t\in [0,T] \\
      \label{equ:mb-b}
      \hbe_t &= \frac{1}{T} M_0 + \int_0^t \oo{T-u}\, dM_u, \  t\in [0,T).
   \end{align}
  The following statements are equivalent under Assumption \ref{asm:cont}:
   \begin{enumerate}[label*=\arabic*.]
      \item $\sB^{2,1}(\xi) \ne \emptyset$.
      \item $\hbe \in \sB^{2,1}(\xi)$.
      \item $\hbe \in \lto$.
      \item $\ee*{\int_0^T \oo{T-t}\, d\ab{M}_t}<\infty$.
   \end{enumerate}
 When $\sB^{2,1}(\xi) \ne \emptyset$, the process $\hbe$ given by
\eqref{equ:mb-b} is, up to a version,
\begin{enumerate}
\item[(a)] the unique martingale on $[0,T)$ in
$\sB^{2,1}(\xi)$
   \item[(b)] the minimal $\bL^{2,1}$-norm element in $\sB^{2,1}(\xi)$.
\end{enumerate}
\end{theorem}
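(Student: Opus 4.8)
The plan is to prove the equivalence of \(1.\)–\(4.\) as the cycle \(2.\Rightarrow 1.\Rightarrow 4.\Rightarrow 3.\Rightarrow 2.\) and then deduce (a) and (b), everything resting on two identities for the process \(\hbe\) of \eqref{equ:mb-b}. Write \(N_t:=\int_0^t\tfrac1{T-u}\,dM_u\), so that \(\hbe=\tfrac1T M_0+N\) and \(d\hbe_t=\tfrac1{T-t}\,dM_t\) on \([0,T)\). \emph{Identity I}: applying It\^o's product rule to \((T-t)\hbe_t\) gives
\begin{equation}
   M_t=\int_0^t\hbe_u\,du+(T-t)\,\hbe_t,\qquad t\in[0,T),
\end{equation}
and, conversely, if \(\gamma\in\sB^{2,1}(\xi)\) is any martingale on \([0,T)\) then \(M_t=\ee{\int_0^T\gamma_u\,du\giv\sF_t}=\int_0^t\gamma_u\,du+(T-t)\gamma_t\) by the conditional Fubini theorem, forcing \(d\gamma_t=\tfrac1{T-t}\,dM_t\) and \(\gamma_0=M_0/T\), i.e.\ \(\gamma=\hbe\). \emph{Identity II}: since \(\qvar{N}_t=\int_0^t\tfrac1{(T-u)^2}\,d\qvar{M}_u\), interchanging \(dt\) and \(d\qvar{M}_u\) (Tonelli) gives \(\int_0^T\qvar{N}_t\,dt=\int_0^T\tfrac1{T-u}\,d\qvar{M}_u\), and whenever \(\ee{\int_0^T\tfrac1{T-u}\,d\qvar{M}_u}<\infty\) the process \(N\) is a true \(\ltwo\)-martingale on \([0,T)\), the cross term vanishes, and
\begin{equation}
   \ee{\int_0^T\hbe_u^2\,du}=\tfrac1T M_0^2+\ee{\int_0^T\tfrac1{T-t}\,d\qvar{M}_t}.
\end{equation}

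Granting these, \(2.\Rightarrow 1.\) is immediate and \(4.\Rightarrow 3.\) is Identity II. For \(3.\Rightarrow 2.\): if \(\hbe\in\lto\) then \(\int_0^T\abs{\hbe_u}\,du<\infty\) a.s., so \(\hat\xi:=\int_0^T\hbe_u\,du\) is a well-defined \(\lone\)-variable, and \(\liminf_{t\upto T}(T-t)^2\hbe_t^2=0\) a.s.\ (otherwise \(\int_0^T\hbe_t^2\,dt=\infty\) on a set of positive probability); since \(M\) is continuous, \(M_t\to\xi\) a.s., so Identity I shows \((T-t)\hbe_t\to\xi-\hat\xi\) a.s., and as \(\liminf_{t\upto T}\abs{(T-t)\hbe_t}=0\) this limit is \(0\), i.e.\ \(\hbe\in\sB^{2,1}(\xi)\).

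The substantive step is \(1.\Rightarrow 4.\) Fix \(\beta\in\sB^{2,1}(\xi)\) and set \(R_t:=M_t-\int_0^t\beta_u\,du=\ee{\int_t^T\beta_u\,du\giv\sF_t}\); conditional Jensen and Cauchy--Schwarz give the a priori bound \(R_\tau^2\le(T-\tau)\,\ee{\int_0^T\beta_u^2\,du\giv\sF_\tau}\) for every \([0,T)\)-valued stopping time \(\tau\), hence \(\ee{R_\tau^2/(T-\tau)}\le\norm{\beta}_{\lto}^2\). Since \(\qvar{R}=\qvar{M}\), It\^o's formula for \(Z_t:=R_t^2/(T-t)\) together with the elementary identity \(\tfrac{R_t^2}{(T-t)^2}-\tfrac{2R_t\beta_t}{T-t}=\prn{\tfrac{R_t}{T-t}-\beta_t}^2-\beta_t^2\) yields
\begin{equation}
   \tfrac1{T-t}\,d\qvar{M}_t+\prn{\tfrac{R_t}{T-t}-\beta_t}^2 dt=dZ_t-\tfrac{2R_t}{T-t}\,dM_t+\beta_t^2\,dt.
\end{equation}
Dropping the square, localizing \(\int\tfrac{2R}{T-t}\,dM\) by \(\sigma_n\upto T\), integrating over \([0,s\wedge\sigma_n]\) and taking expectations bounds \(\ee{\int_0^{s\wedge\sigma_n}\tfrac1{T-t}\,d\qvar{M}_t}\) by \(\ee{Z_{s\wedge\sigma_n}}+\norm{\beta}_{\lto}^2\le 2\norm{\beta}_{\lto}^2\) (using \(Z_0\ge 0\) and the a priori bound at the bounded stopping time \(s\wedge\sigma_n<T\)); letting \(n\to\infty\) and then \(s\upto T\) gives \(\ee{\int_0^T\tfrac1{T-t}\,d\qvar{M}_t}\le 2\norm{\beta}_{\lto}^2<\infty\). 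I expect this to be the main obstacle: guessing the Lyapunov functional \(R_t^2/(T-t)\), spotting the square completion that simultaneously cancels the \(\beta\)-cross-term and leaves \(\norm{\beta}_{\lto}^2\), and controlling the boundary term \(Z_{s\wedge\sigma_n}\) uniformly through the a priori estimate.

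Finally, assume \(\sB^{2,1}(\xi)\ne\emptyset\), so \(1.\)–\(4.\) all hold; Identity II shows \(\hbe=\tfrac1T M_0+N\) is a continuous martingale on \([0,T)\) lying in \(\sB^{2,1}(\xi)\), and (a) is then the converse half of Identity I. For (b): \(\sB^{2,1}(\xi)=\Phi^{-1}(\set{\xi})\) for the bounded operator \(\Phi\colon\lto\to\lone\), \(\Phi\beta=\int_0^T\beta_u\,du\), hence a nonempty closed affine subspace of the Hilbert space \(\lto\), so it possesses a unique minimal-norm element \(\beta^*\), characterized by \(\beta^*\perp\ker\Phi\). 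Testing this orthogonality against \(\eta=\ind{A}\prn{\tfrac1{t-s}\ind{(s,t]}-\tfrac1{T-t}\ind{(t,T]}}\in\ker\Phi\), for \(A\in\sF_s\) and \(s<t<T\), forces \(\ee{\beta^*_r\giv\sF_s}\) to be independent of \(r\in(s,T)\); combined with the continuity in \(s\) of \(\ee{\int_s^T\beta^*_u\,du\giv\sF_s}=M_s-\int_0^s\beta^*_u\,du\) (here \(\Phi\beta^*=\xi\)), this exhibits \(\beta^*\), up to a version, as a continuous martingale on \([0,T)\) in \(\sB^{2,1}(\xi)\), whence \(\beta^*=\hbe\) by (a).
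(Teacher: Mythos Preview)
Your proof is correct and complete, but the route differs from the paper's in two notable places.

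For the implication from nonemptiness of $\sB^{2,1}(\xi)$ to the integrability condition on $\ab{M}$, the paper takes an indirect path through optimization: it shows that the $\lto$-minimizer over $\sB^{2,1}(\xi)$ exists, derives from the first-order conditions that this minimizer is a martingale, identifies it with $\hbe$ via \itos{} formula (this is the implication $1.\to 2.$), and only then passes to condition $4.$ via a Fubini computation with stopping by $\ab{M}$ and the optional sampling theorem. Your argument is more direct: the Lyapunov functional $Z_t=R_t^2/(T-t)$ together with the square completion and the a priori estimate $\ee{Z_\tau}\le\norm{\beta}_{\lto}^2$ yields $1.\Rightarrow 4.$ in one stroke, and even produces the explicit bound $\ee{\int_0^T\tfrac{1}{T-t}\,d\ab{M}_t}\le 2\norm{\beta}_{\lto}^2$ for every $\beta\in\sB^{2,1}(\xi)$, which the paper does not state. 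The paper's approach, on the other hand, delivers (b) as a by-product of $1.\to 2.$, whereas you have to revisit the orthogonality argument separately at the end.

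For the verification that $\int_0^T\hbe_u\,du=\xi$ (your $3.\Rightarrow 2.$), the paper uses a second integration by parts and l'H\^opital's rule, while you argue via Identity~I and the observation that $\liminf_{t\uparrow T}\abs{(T-t)\hbe_t}=0$ a.s. Both arguments are short; yours avoids the somewhat formal invocation of l'H\^opital on random functions. Your derivation of (b) from the orthogonality test functions is essentially the paper's first-order-condition argument recast, and your (a) coincides with the paper's.
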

\begin{proof}
   \emph{1.$\to$ 2.}
   Assuming that $\sB^{2,1}(\xi)$ is nonempty
   consider the minimization problem
   \begin{align}
      \label{equ:inf}
      \inf_{\beta\in \sB^{2,1}(\xi)} \ee{\int_0^T \beta_u^2\, du}
      = \inf_{\beta\in \sB^{2,1}(\xi)} \norm{\beta}_{\mathbb{L}^{2,1}}^2.
   \end{align}

The set $\sB^{2,1}(\xi)$ is convex and closed in $\lto$.  By
intersecting it with a large-enough ball in $\lto$, we may assume that
it is also bounded in $\lto$.  The Banach Alaoglu theorem ensures then
that such restricted subset  of $\sB^{2,1}(\xi)$ is   \emph{weakly
compact}. Since the $\lto$ norm is a weakly lower semicontinuous
function, so is its square and thus there exists a  $\tbe$ which attains
the minimum in \eqref{equ:inf}. This minimizer is also unique by strict
convexity of the objective function.
\\
 \indent {The rest of the proof of this implication is organized as follows.
    We start by showing
that  the minimality of $\tbe$   implies that $\tbe$ is orthogonal
in $\bL^{2,1}$ to a sufficiently rich class of processes. Using this
result, we prove that $\tbe$ is a martingale. Finally,  we apply
It\^{o}'s formula  and find  that (a modification of) $\tbe$
coincides with $\hbe$.}
\\
\indent
In order to show martingality of $\tbe$, we perturb it in the
direction of a process $\gamma \in \bL^{2,1}$ with $\textstyle\int_0^T
\gamma_u\, du = 0$, a.s.
 {By construction, the processes
    $\tbe\pm \eps \gamma$ belong to $\sB^{2,1}(\xi)$, which implies that:
    $$\norm*{\tbe\pm
\eps \gamma}^2_{ {\bL^{2,1}}}\geq
\norm*{\tbe}^2_{ {\bL^{2,1}}},$$ for each $\eps\in\R_+$. Writing
down the relevant expectations,  the inequality becomes
$$\norm*{\tbe}^2_{ {\bL^{2,1}}}  \pm\, 2 \eps \ee{\int_0^T
    \tbe_u\gamma_u \, du} +\eps^2 \ee{\int_0^T \gamma_u^2\, du} \geq
    \norm*{\tbe}^2_{ {\bL^{2,1}}}. $$
Simplifying and sending
$\eps$ to zero,} we get the following set of ``first-order
conditions''
\begin{align}
    \label{equ:variational}
    \ee{ \int_0^T \tbe_u \gamma_u\, du} = 0, \ \forall\,
    \gamma\in \bL^{2,1} \ewith \int_0^T \gamma_u\, du = 0, \text{ a.s.}
\end{align}
   Given $t < s$ in $[0,T)$, for each  $\sF_t$-measurable
   and random variable $\chi \in \ltwo$ we define
   \begin{align*}
      \gamma^{\chi}_u = \begin{cases}
                            {  0 } &  {u\in [0,t] } \\
                           \chi \too{s-t},  & u \in (t,s], \\
                           -\chi \too{T-s}, & u \in (s,T].
                        \end{cases}
   \end{align*}
   so that $\int_0^T \gamma^{\chi}_u\, du =0$ for each $\chi$.
   By applying the equality
   in \eqref{equ:variational} to $\gamma^{\chi}$ for all $\sF_t$-measurable $\chi
      \in \ltwo$ we obtain
   \begin{align*}
   \ee*[\Big]{ \frac{\tA_s - \tA_t}{s-t} \giv \sF_t} =
\ee*[\Big]{ \frac{\tA_T -\tA_s}{T-s} \giv \sF_t}\, \text{ a.s.}\, ,
   \end{align*}
   where $\tA_t = \int_0^t \tbe_u\, du$.
    {Since $\tbe \in
       \sB^{2,1}(\xi)$, $ \tA_T = \xi = M_T$, with $M$ given by \eqref{equ:mb-m}.
   Slightly rearranging,  we obtain:
   \begin{align}
      \label{equ:iden}
  \oo{s-t}\Big(\ee*[\big]{ \tA_s \giv \sF_t} -\tA_t  \Big) =
\oo{T-s} \Big(M_t - \ee*[\big]{ \tA_s \giv \sF_t}  \Big) \eas
   \end{align}
   Since the right-hand side of \eqref{equ:iden} is
   a martingale in $t$ on $[0,s)$, so is the left-hand side. In
   particular, the finite-variation part in its semimartingale
   decomposition, given via integration by parts by
  \begin{align*}
  \int_0^t \left( \frac{1}{(s-u)^2}\big( \ee*[\big]{ \tA_s \giv \sF_u}
-\tA_u\big)\, - \frac{1}{s-u}  \tbe_u \right)\, du,
  \end{align*}
  must vanish for all $t<s$, a.s. Consequently,
$$\tbe_t = \frac{ \ee*[\big]{ \tA_s \giv \sF_t} - \tA_t}{s-t}\text{
a.s., for almost all $t<s<T$}.$$
Passing to the limit $s\uparrow T$ on the right hand side above, we obtain}

   \begin{align}
      \label{equ:betadrift}
  \tbe_t = \frac{ \ee*[\big]{ \tA_T \giv \sF_t} - \tA_t}{T-t} =  \frac{  M_t - \tA_t}{T-t} =\frac{  M_t - \int_0^t \tbe_u\,du}{T-t}.
   \end{align}
   It follows that $\tbe$ has a continuous version on $[0,T)$, which we, from now
   on, adopt. Furthermore, the right-hand side of \eqref{equ:betadrift} is a
   semimartingale on $[0,T)$
   so we can use \itos{} formula once more to conclude that
   \[ \tbe_t = \oo{T} M_0 + \int_0^t \oo{T-u}\, dM_u \efor t\in [0,T).\]
   Therefore, $\hbe = \tbe$ and statement \emph{2.} follows immediately.
   \medskip

   \emph{2.}~$\to$ \emph{3.}~ Immediate.
   \medskip
    {
    {\emph{3.}~$\to$~\emph{4.}}
   With $\tau_n = \inf\sets{t\geq 0}{\ab{M}_t \geq n}$, Fubini's
theorem implies that}
 {
   \begin{equation}
       \label{Fub-chain}
      \begin{split}
              \ee{\int_0^{T} (\hbe_{\tau_n \wedge  u} - \hbe_0)^2\, du}
              & = \int_0^T \ee{\int_0^{\tau_n \wedge u}
                  \oo{(T-t)^2}\,
          d\ab{M}_t}\, du\\
              & = \ee{\int_0^T\int_t^T
                  \inds{t\leq \tau_n} \oo{(T-t)^2}\,
                  du\,
              d\ab{M}_t}\\
              &= \ee{\int_0^{\tau_n} d\ab{M}_t \int_t^{T}   \oo{(T-t)^2}\,du
              } =  \ee{ \int_0^{\tau_n} \oo{T-t}\, d\ab{M}_t}.
      \end{split}
   \end{equation}
}
    {
   Since $\hbe^{\tau_n}$ is an $\ltwo$-bounded martingale
   on $[0,t]$ for each $t<T$, $(\hbe^{\tau_n}-\hbe_0)^2$ is a submartingale on
   the same domain, and the optional sampling
   theorem implies that
   \begin{align*}
       \ee{ (\hbe_{\tau_n \wedge t} - \hbe_0)^2} \leq \ee{
       (\hbe_t - \hbe_0)^2} \eforeach t<T.
   \end{align*}
   Thus, by \eqref{Fub-chain},
   \[ \ee{ \int_0^{\tau_n} \oo{T-t}\, d\ab{M}_t} =
\int_0^T \ee{ (\hbe_{\tau_n \wedge u} - \hbe_0)^2}\, du \leq
\int_0^T \ee{ (\hbe_u - \hbe_0)^2}\, du = \norm*{\hbe-\hbe_0}_{\lto} ^2 <
\infty,\]
and it suffices to let $n\to\infty$ and use the monotone convergence
theorem. }

\medskip

 {   { \emph{4.}~$\to$~\emph{3.}}
   We let $n\to\infty$ in \eqref{Fub-chain} and use
   Fatou's lemma on the left-hand side and the monotone
   convergence theorem on the right to conclude that
  \begin{align*}
  \norm*{\hbe}_{\lto} \leq \abs*{\hbe_0} +
  \norm*{\hbe - \hbe_0}_{\lto} \leq \abs*{\hbe_0}+
  \ee{ \int_0^{T} \oo{T-t}\, d\ab{M}_t}^{1/2}<\infty.
\end{align*}
}

\smallskip
   \emph{3.}~$\to$~\emph{1.} It suffices to show that
   $\int_0^T \hbe_t\, dt = \xi$, a.s. The definition of $\hbe$ in
   \eqref{equ:mb-b}
   and integration by parts imply that for $t\in [0,T)$ we have
   \begin{align}
      \label{equ:ip-1}
      (T-t) \hbe_t = T \hbe_0  + \int_0^t \,dM_u - \int_0^t \hbe_u\, du
      = M_t - \int_0^t \hbe_u\, du.
   \end{align}
  Another round of integration by parts, but this time applied to the stochastic integral
   $\int_0^t \oo{T-u}\, dM_u$, implies that
   \begin{align}
      \label{equ:ip-2}
      \hbe_t = \oo{T-t} M_t + \int_0^t \frac{M_u}{ (T-u)^2}\, du.
   \end{align}
   Put together, identities \eqref{equ:ip-1} and \eqref{equ:ip-2}, give
   \begin{align*}
      \int_0^t \hbe_u \, du =
      \frac{ \int_0^t  \frac{M_u}{(T-u)^2}\, du,}{ \oo{T-t} } \efor
      t\in [0,T),
   \end{align*}
   and the final step is to use l'H\^ opital's rule and the fact that $M_t\to \xi$, as $t\to
      T$.
\\ \vspace{0.1cm} \indent
 Concerning the last part of the statement of the theorem,  (b) was established
 in the course of the proof of \emph{1.}~$\to$~\emph{2.} above. For (a), we assume that
there exists another martingale $\beta^*$ in
$\sB^{2,1}(\xi)$ so that
$$ M_t = \ee{\int_0^T \beta^*_u\,du \giv \sF_t} = \int_0^t \beta^*_u\, du
+ \beta^*_t (T-t).$$
The equality \eqref{equ:ip-1} above implies that
$$ 0= \int_0^t (\beta^*_u - \hat{\beta}_u)\, du
+ (\beta^*_t - \hat{\beta}_t) (T-t) \eforall t\in [0,T), \eas $$
It follows that $\beta^*_t - \hbe_t$ is continuously differentiable for
$t\in [0,T)$, and the conclusion $\beta^* = \hbe$ follows  by differentiation.
\end{proof}
 {
\begin{remark}
\label{rem:Volterra}
As the anonymous referee observes,
equation \eqref{equ:betadrift}
can be interpreted as a (pathwise) Volterra-type equation of the
second kind:
\begin{align*}
    u(t) = f(t) + \int_0^{T} K(t,s) u(s)\ ds, \ewhere f(t) =
    \frac{M_t}{T-t} \eand K(t,s) = -\oo{T-t} \inds{s\leq t}.
\end{align*}
A \emph{formal} iterative solution
(obtained by repeatedly replacing $u(\cdot)$ on the
{right-hand} side by the whole right-hand side and then taking the
limit) can be
written as $\tilde\beta_t = \lim_n \un(t)$, where
\begin{align*}
    \un(t) = f(t) + \sum_{i=0}^{ {n}} \int K^i(t,s) f(s)\, ds
\end{align*}
and $K^i$ is the $i$-th composition power of $K$, i.e.,
$K^{0} = K$ and
\begin{align}
    \label{Ki}
    K^i(t,s) &= \int_0^T  \dots \int_0^T K(t,s_1) K(s_1,s_2)\dots
    K(s_i,s)\, ds_1\, \dots \, ds_i \efor i\geq 1.
\end{align}
Substituting the explicit expression $K(t,s) = -(T-t)^{-1}
\inds{s\leq t}$ into \eqref{Ki} above, we obtain
\[ K^i(t,s)= (-1)^{i}(T-t)^{-1} \int_0^T  \dots \int_0^T \oo{T-s_1} \dots
             \oo{T-s_i} \inds{ t \geq s_1 \geq \dots \geq s_i \geq s
         } \, ds_1\, \dots \, ds_i. \]
The last iterated integral is taken over the simplex
$\Delta = \{ (s_1, \dots,  s_{i}) \in
[s,t]^{i-1}\, : \, s_1\leq s_2 \dots \leq s_{i}\}$, and
the function $\prod_{j=1}^i (T-s_k)^{-1}$ inside the integral is
symmetric in $s_1,\dots,
s_{i-1}$, so, for $s\leq t$ we have
\begin{align*}
    K^i(t,s)
    &= (-1)^{i} \oo{T-t}\, \oo{i!} \int_s^t\dots \int_s^t
    (T-s_1)^{-1} \dots (T-s_i)^{-1}\, ds_1\, \dots\, ds_i\\ &
    = (-1)^{i} \oo{T-t} \oo{i!} \Big(\int_s^t (T-u)^{-1}\,
     du\Big)^i=
      \oo{T-t} \oo{i!} \Big(\log \frac{T-t}{T-s}\Big)^i,
\end{align*}
and consequently,
\[ \sum_{i=0}^{\infty} K^i(t,s) =  \oo{T-s} \inds{s\leq t}.\]
This implies that the formal solution $\tilde{\beta}_t$ takes the form
\begin{align}
    \label{mmm}
    \tilde{\beta}_t = \frac{M_t}{T-t}  +  \int_0^t
    \frac{M_s}{(T-s)^2}\, ds,
\end{align}
which, after integrating by parts in the last integral, matches
\eqref{equ:mb-b}.
\end{remark}
}

\section{The weakly regular case}

 {Fix an $\mathcal{F}_T$ measurable random variable  $\xi \in \bL^0$},  and  let $\sMl(\xi)$ denote the set of all $M\in \sMl$
such that $M_T = \xi$.  Let $\sP^1(\xi)$ be the set of probabilities in $\mathcal{P}$ which integrate $\xi$.\footnote{ {The set $\sP^1(\xi)$ is not empty, and it can be proved as in the proof of the next Theorem, arrow
\emph{1.}~$\to$~\emph{2.}, by setting $\beta=0$.}}
For $\bP \in\sP^1(\xi)$, we set
\[ \MPx_t = \eep{\xi \giv \sF_t},\ t\in [0,T],\]
taken in its continuous version, so that $\MPx$ is the unique
$\bP$-martingale in $\sMl(\xi)$.  Finally, for $M \in \sMl$
we define
\begin{align}
   \label{equ:beM}
   \hbeta^M_t = \frac{1}{T} M_0  + \int_0^t \oo{T-u}\, dM_u, \  t\in [0,T).
\end{align}

\begin{theorem}
   \label{thm:main-weak}
   For an  $\mathcal{F}_T$ measurable random variable  $\xi \in \bL^0$, the following are equivalent:
   \begin{enumerate}[label*=\arabic*., itemsep = 0.5em]
      \item $\sB^{2,0}(\xi) \ne \emptyset$.
     \item  {$\sB^{2,1}( \xi,\mathbb{Q})\ne \emptyset$} for some $\mathbb{Q}\in\sP^1(\xi)$.
      \item $\hbeta^M \in \bL^{2,0}$ for some $M \in \sMl(\xi)$.
      \item $\int_0^T \oo{T-t}\, d\ab{M}_t<\infty \,\eas$,
      for some $M \in \sMl(\xi)$.
\item $\eeqst{ \int_0^T \oo{T-t}\, d\ab{M^{\mathbb{Q},\xi}}_t }<\infty $,  for some $\mathbb{Q}
\in \sP^1(\xi)$.
   \end{enumerate}
\end{theorem}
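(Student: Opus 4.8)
The plan is to establish the cyclic chain of implications $4\Rightarrow 3\Rightarrow 1\Rightarrow 2\Rightarrow 5\Rightarrow 4$, among which only $4\Rightarrow 3$ demands any real work. For $2\Rightarrow 5$ I would apply Theorem~\ref{thm:main-strong} with the fixed measure there taken to be the $\bQ\in\sP^1(\xi)$ furnished by statement~2: Assumption~\ref{asm:cont} is a hypothesis on the whole family $\sP$, hence holds under $\bQ$, and $\bQ\in\sP^1(\xi)$ is exactly the requirement $\xi\in\bL^1(\bQ)$, so the equivalence of items~1 and~4 in Theorem~\ref{thm:main-strong} says precisely that $\sB^{2,1}(\xi,\bQ)\neq\emptyset$ if and only if $\eeq{\int_0^T\oo{T-t}\,d\ab{M^{\bQ,\xi}}_t}<\infty$; in particular $2\Rightarrow 5$ (and, in fact, $5\Rightarrow 2$ as well). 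For $5\Rightarrow 4$, note that $M^{\bQ,\xi}$, being a continuous $\bQ$-martingale with $M^{\bQ,\xi}_T=\xi$, belongs to $\sMl(\xi)$, and that finiteness of the $\bQ$-expectation in statement~5 forces $\int_0^T\oo{T-t}\,d\ab{M^{\bQ,\xi}}_t<\infty$ $\bQ$-a.s., hence $\bP$-a.s.\ by equivalence; this is statement~4 with $M=M^{\bQ,\xi}$. For $1\Rightarrow 2$ I would take $\beta\in\sB^{2,0}(\xi)$, set $Y=\int_0^T\beta_u^2\,du$ (finite a.s.), and let $\bQ$ be given by $\tRN{\bQ}{\bP}=\tfrac{c}{1+Y}$, where $c=1/\ee{(1+Y)^{-1}}\in[1,\infty)$: then $\bQ\sim\bP$, so $\bQ\in\sP$ by maximality of $\sP$, the inequality $\eeq{Y}\le c<\infty$ gives $\beta\in\sB^{2,1}(\xi,\bQ)$, and since $\abs{\xi}=\abs{\int_0^T\beta_u\,du}\le\sqrt T\,Y^{1/2}$ a.s., Jensen's inequality yields $\eeq{\abs{\xi}}\le\sqrt T\,\eeq{Y}^{1/2}<\infty$, so $\bQ\in\sP^1(\xi)$.

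For $3\Rightarrow 1$ I would show that the process $\hbeta^M$ of \eqref{equ:beM} itself lies in $\sB^{2,0}(\xi)$. Statement~3 gives $\hbeta^M\in\bL^{2,0}$, so only $\int_0^T\hbeta^M_u\,du=\xi$ a.s.\ remains; this is the pathwise content of the proof of $3\Rightarrow 1$ in Theorem~\ref{thm:main-strong}. The integration-by-parts identities \eqref{equ:ip-1} and \eqref{equ:ip-2} are valid for any continuous semimartingale $M$ on $[0,T)$ and combine to give $\int_0^t\hbeta^M_u\,du=(T-t)\int_0^t\frac{M_u}{(T-u)^2}\,du$, whose limit as $t\uparrow T$ equals $\xi$ by l'H\^opital's rule---here one uses only that $M\in\sMl(\xi)$ is, by Assumption~\ref{asm:cont}, continuous on the closed interval $[0,T]$ with $M_T=\xi$, which takes over the role played by the martingale convergence theorem in the strongly regular case. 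Since an $\bL^{2,0}$ process has, a.s., Lebesgue-integrable paths on $[0,T]$, the left-hand side also converges to $\int_0^T\hbeta^M_u\,du$ as $t\uparrow T$, and the two limits coincide.

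The heart of the matter is $4\Rightarrow 3$, which is a pathwise counterpart of the Fubini chain \eqref{Fub-chain}. Given $M\in\sMl(\xi)$ as in statement~4, I would fix $\bP'\in\sP$ under which $M$ is a local martingale and put $N_t=\int_0^t\oo{T-u}\,dM_u$ for $t\in[0,T)$, a continuous $\bP'$-local martingale with $\ab{N}_t=\int_0^t(T-u)^{-2}\,d\ab{M}_u$; since $\hbeta^M=\tfrac1T M_0+N$ and $M_0$ is, $\sP$-a.s., a finite constant (as $\sF_0$ is $\sP$-trivial), it suffices to prove $\int_0^T N_u^2\,du<\infty$ a.s. For $m\in\N$ set $\rho_m=\inf\bigl\{t\in[0,T)\,:\,\int_0^t\oo{T-u}\,d\ab{M}_u\ge m\bigr\}$, with $\rho_m:=T$ if this set is empty; then $\rho_m$ is a stopping time, $\int_0^{\rho_m}\oo{T-u}\,d\ab{M}_u\le m$, and for $u<T$ the monotonicity of $s\mapsto(T-s)^{-1}$ gives the pathwise bound $\ab{N}_{u\wedge\rho_m}=\int_0^{u\wedge\rho_m}(T-s)^{-2}\,d\ab{M}_s\le\tfrac1{T-u}\int_0^{u\wedge\rho_m}\oo{T-s}\,d\ab{M}_s\le\tfrac{m}{T-u}$, so that $N^{\rho_m}$ is a square-integrable $\bP'$-martingale on $[0,u]$ for every $u<T$. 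The It\^o isometry together with Fubini--Tonelli then yield
\begin{align*}
  \eeup{\bP'}{\int_0^T N_{u\wedge\rho_m}^2\,du}
  &= \int_0^T \eeup{\bP'}{\ab{N}_{u\wedge\rho_m}}\,du
   = \eeup{\bP'}{\int_0^{\rho_m}(T-s)\,d\ab{N}_s} \\
  &= \eeup{\bP'}{\int_0^{\rho_m}\oo{T-s}\,d\ab{M}_s} \le m,
\end{align*}
whence $\int_0^T N_{u\wedge\rho_m}^2\,du<\infty$ $\bP'$-a.s., hence $\bP$-a.s. On the event $\bigl\{\int_0^T\oo{T-t}\,d\ab{M}_t<m\bigr\}$ one has $\rho_m=T$, so that $\int_0^T N_u^2\,du=\int_0^T N_{u\wedge\rho_m}^2\,du<\infty$ there; letting $m\to\infty$ and invoking statement~4 completes the argument. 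I expect the only genuinely delicate point to be the decision to localize at the hitting times of the increasing functional $t\mapsto\int_0^t(T-u)^{-1}\,d\ab{M}_u$ rather than by cutting the horizon short of $T$ (the latter destroys the information near $T$ that makes $\int_0^T N_u^2\,du$ finite); once that is in place, the remaining work---and all four of the other implications---is routine or a direct appeal to Theorem~\ref{thm:main-strong}.
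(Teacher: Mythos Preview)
Your proof is correct and follows essentially the same cycle of implications as the paper: $1\to 2\to 5\to 4\to 3\to 1$, with $2\to 5$ and $3\to 1$ borrowed from Theorem~\ref{thm:main-strong} and $5\to 4$ immediate. Your $1\Rightarrow 2$ differs cosmetically---the paper puts $\abs{\xi}$ directly into the denominator of the density, while you omit it and recover $\bQ$-integrability of $\xi$ via Cauchy--Schwarz---and your $4\Rightarrow 3$ is a mild streamlining of the paper's argument: you use the same stopping times $\rho_m=T_m$, but your pathwise bound $\ab{N}_{u\wedge\rho_m}\le m/(T-u)$ shows directly that $N^{\rho_m}$ is a square-integrable martingale on each $[0,u]$, $u<T$, which lets you skip the second localizing sequence $\tau_n$ and the Fatou/monotone-convergence passage that the paper uses.
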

\begin{proof}

   \emph{1.}~$\to$~\emph{2.}
   We pick $\beta \in \sB^{2,0}(\xi)$ and  {$\bP\in \sP$, and define
   $\mathbb{Q} \in \sP$} by
   \begin{align*}
      \frac{d\mathbb{Q}}{d\mathbb{P}}=  c \frac{1}{1+\abs{\xi}+\int_0^T \beta_u^2\, du},
   \end{align*}
   where $c$ is the normalizing constant. This way $\mathbb{Q}\in \sP^1(\xi)$  and the process $\beta$ belongs to
   $\lto(\mathbb{Q})$, and, hence, also to $\sB^{2,1}(\mathbb{Q})$.
   
\medskip
   
\emph{2.}~$\to$~\emph{5.} This is the content of the implication
   \emph{1.}~$\to$~\emph{4.}~in Theorem \ref{thm:main-strong}, but,
   possibly, under an equivalent probability measure.
\medskip

   \emph{5.}~$\to$~\emph{4.} Immediate.

\medskip

   \emph{4.}~$\to$~\emph{3.} Let
   $M\in \sMl(\xi)$ be as in the statement, and let
   $\mathbb{Q}\in\sP$ be such that $M$ is a $\mathbb{Q}$-local martingale.
   We define the nondecreasing sequence
   $\{T_m\}_{m\in\N}$ of stopping times by
\[ T_m = \inf\sets*{t\geq 0}{ \int_0^t \oo{T-u}\, d\ab{M}_u \geq m},\]
so that $\mathbb{Q}[ T_m < T] \to 0$, as $m\to\infty$. The process $\hbeta^M$,
given by \eqref{equ:beM} above, is a continuous local $\mathbb{Q}$-martingale
on $[0,T)$ so there exists another nondecreasing sequence $\seq{\tau_n}$ of
stopping times with the property that $\tau_n \to T$, a.s., such that
$(\hbeta^M)^{\tau_n}$ is an $\ltwo(\mathbb{Q})$-bounded martingale on $[0,T]$. In
particular,
   \[ \eeq{ (\hbeta^M_{u\wedge T_m \wedge \tau_n})^2} = \eeq{
          {\langle \hbeta^{M}\rangle_{u\wedge T_m \wedge \tau_n}}}
         \text{ for all $m,n \in \N$ and $u\in
         [0,T)$.} \]
   We let $n\to\infty$ and use Fatou's lemma together with the monotone convergence theorem   to  conclude that
   \begin{align}
      \label{equ:fatou}
      \eeq{ (\hbeta^M_{T_m \wedge u})^2 } \leq \eeq{\ab*{\hbeta^M}_{T_m \wedge u}}
      = \eeq{
         \int_0^{ T_m \wedge u} \oo{(T-r)^2}\, d\ab*{M}_r}
   \end{align}
   for $u<T$ and $m\in\N$.
    {By
   using the inequality
  $$\int_0^{T_m} (\hbeta^M_u)^2\, du \leq \int_0^T
(\hbeta^M_{T_m\wedge u})^2\, du \ \  a.s.,$$
   integrating \eqref{equ:fatou} above in $u$ over $[0,T]$,
 and applying Fubini's Theorem under the product measure $du \otimes
 d\ab{M}_r$ we obtain
   \begin{align*}
       \eeq{ \int_0^{T_m} \big(\hbeta^M_{ {u}}\big)^2\, du} &\leq
     \eeq{ \int_0^T \big(\hbeta^M_{T_m \wedge u }\big)^2\, du}
     = \eeq{ \int_0^T \int_0^T \inds{r \leq u \wedge T_m} \oo{(T-r)^2}\,
     d\ab{M}_r\, du
     } \\
     &= \eeq{ \int_0^{T_m}
\oo{(T-r)^2}
     d\ab{M}_r
\int_r^{T}  \, du\,
     }
         = \eeq{ \int_0^{T_m}  \oo{T-r}\, d\ab{M}_r}\leq m.
   \end{align*}
}
Since $\mathbb{Q}[T_m =
T] \to 1$ as $m\to\infty$, we conclude that $\int_0^T (\hbeta^M_u)^2\, du <
\infty$, a.s.

\medskip

\emph{3.}~$\to$~\emph{1.} The last argument in the proof of Theorem
\ref{thm:main-strong} is based only on the integration by parts
formula and on the property  {$\hbeta \in  \loz$. Therefore  it can be
applied here, since   $\hbeta^M \in \ltz \subseteq \loz$.}
\end{proof}

\begin{remark}\
\label{rem:Fubini}
As it aims for generality, but also operates
within specific regularity classes, our proof of Theorem
\ref{thm:main-weak} above  does not use the \emph{stochastic Fubini
theorem},  {a joint name for a class of statements about the
    permissibility of the interchange of a Lebesgue and a stochastic integral under different
sets of conditions.  We refer the reader to
Theorem  {4.18} in \cite{DapZab14} or
Theorem 2.2 of \cite{Ver12} for two versions referred to later in this paper.}

 {To provide a more detailed explanation,} let us start with a brief
description of how an argument based on it would play out. Under
Assumption \ref{asm:cont}, it would start with a choice of a measure
$\bQ\in\sP$ as in    Theorem \ref{thm:main-weak}, item \emph{2.}~and the associated  martingale
$M^{\bQ,\xi}$, where  we assume,  without loss of generality,  that
$M_0=\eeq{\xi}=0$. When its conditions are satisfied, the stochastic Fubini
theorem,
applied to the function
$\psi(s,t, \omega) = \frac{1}{T-t}I_{[0,s]}(t)$ and with integrals with
respect to $dM$ and $ds$, yields
\begin{align*}
\int_0^T ds \int_0^s \frac{1}{T-t}\, dM_t = \int_0^T dM_t
\,\frac{1}{T-t} \int_t^T ds = M_T = \xi,
\end{align*}
making
\begin{align}
   \label{equ:cand-beta}
\beta_s = \int_0^s \frac{1}{T-t}\, dM_t
\end{align}
an absolutely-continuous representation of $\xi$.
 {One of the weakest conditions} for the above to hold is  due to Veraar (see
Theorem 2.2 of \cite{Ver12}), and it can be stated in our case as
\begin{align}
   \label{equ:Veraar-cond}
\int_0^T  \prn{ \int_0^s \frac{1}{(T-t)^2} \,
   d \ab{M}_t}^{\tfrac{1}{2}}\, ds < \infty, \eas
\end{align}
It is superficially related to our condition 4.~of Theorem
\ref{thm:main-weak}, but it does not automatically insert $\beta$ into
our (weak or strong) regularity classes.
 {For example, assume that we are on a filtration generated
   by a Brownian
motion $W$, and that $\xi = W_T$. In that case} \eqref{equ:Veraar-cond}
is clearly satisfied, but, as we will see in Proposition
\ref{pro:markovian} below, $W_T$ does not admit an absolutely continuous
representation with a weakly (or strongly) regular $\beta$. Put
differently:
\begin{center}
\emph{regularity conditions for the validity of   the stochastic \\
Fubini theorem  do not correspond to our regularity classes.}
\end{center}
\end{remark}

\smallskip

A natural question is whether a condition such as:
\begin{enumerate}[label*=\arabic*., itemsep = 0.5em]
   \item[4'.] $\int_0^T \oo{T-t}\, d\ab{M}_t<\infty \eas$,
   for \underline{all} $M \in \sMl(\xi)$
\end{enumerate}
can be inserted in Theorem \ref{thm:main-weak}. An equivalent question
is whether the condition $4.$ of Theorem \ref{thm:main-weak} implies the
condition $4'.$ above. We only have a partial (positive) answer to this problem. It
states that under certain regularity conditions, if $M^\mathbb{P}$  satisfies condition 4 in Theorem \ref{thm:main-strong}, then all the  probability measures $\mathbb{Q} \sim \mathbb{P}$
with a finite relative entropy
share the property, namely $\eeqst{\int_0^T \oo{T-t}\, d\ab{M^{\bQ}}_t}<\infty$.

 {\begin{proposition}\label{pro:entropy}
Suppose that the filtration is generated by a
$\bP$-Brownian motion $W$ and that
$\xi$ is of the form
\[ \xi = \int_0^T \sigma_u\, dW_u,
    \text{ for some bounded $\sigma$.}
\]
If
\begin{enumerate}
    \item the martingale $M^{\bP}_t = \eep{ \xi  \giv \sF_t}$ satisfies
\[ \eep{\int_0^T \oo{T-t}\, d\ab*{M^{\bP}}_t}<\infty, \eand\]
\item $\bQ$ is a probability measure equivalent to $\bP$
    with a
    finite relative entropy, i.e.,
\begin{align}
   \label{equ:entr}
   \eep{ \RN{\bQ}{\bP} \log \prn{\RN{\bQ}{\bP}}}<\infty,
\end{align}
\end{enumerate}
then
\begin{align}
    \label{MQ-fin}
    \eeq{\int_0^T \oo{T-t}\, d\ab*{M^{\bQ}}_t}<\infty.
\end{align}
\end{proposition}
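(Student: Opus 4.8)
The plan is to reduce the assertion, through Theorem~\ref{thm:main-strong} read under $\bQ$, to producing a single strongly $\bQ$-regular absolutely continuous representation of $\xi$, and then to construct one by peeling off the Girsanov drift. First I would check that $\xi\in\lone(\bQ)$ so that $M^{\bQ}_t:=\eeq{\xi\giv\sF_t}$ is well defined: since $\sigma$ is bounded, $\xi=\int_0^T\sigma_u\,dW_u$ is sub-Gaussian under $\bP$ ($\eep{e^{\lambda\xi}}\le e^{\lambda^2\norm{\sigma}_\infty^2 T/2}$), so $\eep{e^{\lambda\xi^2}}<\infty$ for small $\lambda>0$, and combining this with \eqref{equ:entr} via the Gibbs variational inequality $\eeq{A}\le\tfrac1\lambda\bigl(\eep{\tRN{\bQ}{\bP}\log\tRN{\bQ}{\bP}}+\log\eep{e^{\lambda A}}\bigr)$ (take $A=\xi^2$) gives $\xi\in\ltwo(\bQ)$. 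I would also record the standard fact that on a Brownian filtration finite relative entropy forces the Girsanov kernel $\theta$ --- defined so that $W^{\bQ}_t:=W_t-\int_0^t\theta_u\,du$ is a $\bQ$-Brownian motion --- to satisfy $\eeq{\int_0^T\theta_u^2\,du}=2\,\eep{\tRN{\bQ}{\bP}\log\tRN{\bQ}{\bP}}<\infty$; a short localization proves the finiteness, since with $\tau_n=\inf\{t:\int_0^t\theta_u^2\,du\ge n\}$ one has $\log Z_{T\wedge\tau_n}=\int_0^{T\wedge\tau_n}\theta\,dW^{\bQ}+\tfrac12\int_0^{T\wedge\tau_n}\theta^2\,du$, whence $\tfrac12\eeq{\int_0^{T\wedge\tau_n}\theta_u^2\,du}=\eeq{\log Z_{T\wedge\tau_n}}=\eep{Z_{T\wedge\tau_n}\log Z_{T\wedge\tau_n}}\le\eep{\tRN{\bQ}{\bP}\log\tRN{\bQ}{\bP}}$ by Jensen's inequality for $x\mapsto x\log x$, and then monotone convergence is applied.

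By Theorem~\ref{thm:main-strong} applied under $\bQ$ (legitimate, thanks to $\xi\in\lone(\bQ)$), the target bound \eqref{MQ-fin} is equivalent to $\sB^{2,1}(\xi,\bQ)\ne\emptyset$, i.e.\ to the existence of one $\beta\in\bL^{2,1}(\bQ)$ with $\int_0^T\beta_u\,du=\xi$ a.s. Decomposing
\[
    \xi=\int_0^T\sigma_u\,dW^{\bQ}_u+\int_0^T\sigma_u\theta_u\,du ,
\]
the second summand is already of this form: $\sigma\theta$ is predictable and, by boundedness of $\sigma$ and the entropy bound above, $\eeq{\int_0^T\sigma_u^2\theta_u^2\,du}\le\norm{\sigma}_\infty^2\,\eeq{\int_0^T\theta_u^2\,du}<\infty$, so $\sigma\theta\in\bL^{2,1}(\bQ)$. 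Since $\bL^{2,1}(\bQ)$ is a vector space it remains to represent $\bar\xi:=\int_0^T\sigma_u\,dW^{\bQ}_u$; its closing $\bQ$-martingale is $\int_0^{\cdot}\sigma_u\,dW^{\bQ}_u$, with bracket $\int_0^t\sigma_u^2\,du=\ab{M^{\bP}}_t$, so invoking Theorem~\ref{thm:main-strong} once more, now for $\bar\xi$ under $\bQ$, turns everything into
\begin{equation}\label{crux}
    \eeq{\int_0^T\frac{\sigma_t^2}{T-t}\,dt}<\infty .
\end{equation}
In other words, \eqref{crux} asks exactly that the pathwise quantity $V:=\int_0^T\tfrac{\sigma_t^2}{T-t}\,dt$, which is $\bP$-integrable by hypothesis~(1), be $\bQ$-integrable as well.

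The heart of the matter, and the only point I do not expect to be routine, is \eqref{crux}. Since $\sigma_t^2$ is $\sF_t$-measurable, $\eeq{\sigma_t^2}=\eep{Z_t\sigma_t^2}$ with $Z_t=\eep{\tRN{\bQ}{\bP}\giv\sF_t}$, and one must dominate $\eep{Z_t\sigma_t^2}$ by a function of $t$ that is still integrable against $\tfrac{dt}{T-t}$ near $T$; the available inputs are that $\sigma$ is bounded, that hypothesis~(1) reads here $\int_0^T\tfrac{\eep{\sigma_t^2}}{T-t}\,dt<\infty$ (which quantifies the decay of $\eep{\sigma_t^2}$ as $t\uparrow T$), and that $\eep{Z_t\log Z_t}\le\eep{\tRN{\bQ}{\bP}\log\tRN{\bQ}{\bP}}<\infty$. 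The natural tool is a Young/Fenchel entropy inequality, $ab\le a\log a-a+e^{b}$ with $a=Z_t$ and $b=\kappa_t\sigma_t^2$, giving $\eep{Z_t\sigma_t^2}\le\kappa_t^{-1}\bigl(\eep{Z_t\log Z_t}+\eep{e^{\kappa_t\sigma_t^2}}\bigr)$ (or a level-set variant, estimating $\bQ(\sigma_t^2>\epsilon_t)$ by Markov's inequality and the entropy bound for small events), with the free parameter $\kappa_t$ (resp.\ $\epsilon_t$) tuned to the decay rate of $\eep{\sigma_t^2}$. The genuine difficulty is that $\int_0^T\tfrac{dt}{T-t}=\infty$, so a constant parameter is useless: a successful argument must literally transport the near-$T$ decay of $\eep{\sigma_t^2}$ through the change of measure --- equivalently, prevent the density from concentrating on the shrinking time-dependent set where $\sigma_t$ is large --- and this is precisely where ``$\sigma$ bounded'' and ``(1)'' have to be used together. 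Should the product-space splitting $\ab{M^{\bQ}}\le 2\ab{\bar M^{\bQ}}+2\ab{\Lambda}$ implicit in the previous paragraph turn out to be too lossy, the fallback is to analyse $\ab{M^{\bQ}}$, i.e.\ the $\bQ$-martingale-representation integrand of $M^{\bQ}$, directly, tracking the cancellation between $\sigma$ and the integrand of $\Lambda_t:=\eeq{\int_0^T\sigma_u\theta_u\,du\giv\sF_t}$ rather than bounding the two contributions separately.
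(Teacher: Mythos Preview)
Your route is essentially the paper's: decompose $\xi=\int_0^T\sigma_u\,dW^{\bQ}_u+\int_0^T\sigma_u\theta_u\,du$, dispose of the drift piece via the entropy identity $\eeq{\int_0^T\theta_u^2\,du}\le 2H(\bQ\mid\bP)<\infty$ (so that $\sigma\theta\in\bL^{2,1}(\bQ)$), and reduce everything, through Theorem~\ref{thm:main-strong} read under $\bQ$, to the single estimate you label \eqref{crux}, namely $\eeq{\int_0^T\sigma_t^2/(T-t)\,dt}<\infty$. The paper packages the same reduction via the bracket bound $\ab{M^{\bQ}}=\ab{M^{\bP}-L}\le 2\ab{M^{\bP}}+2\ab{L}$, where $L_t=\eeq{\int_0^T\sigma_u\theta_u\,du\mid\sF_t}$; the $\ab{L}$ term is handled exactly as you handle $\sigma\theta$ (via $\sigma\theta\in\bL^{2,1}(\bQ)$ and Theorem~\ref{thm:main-strong}), and the $\ab{M^{\bP}}$ term is precisely your \eqref{crux}, since $d\ab{M^{\bP}}_t=\sigma_t^2\,dt$.

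Here is the rub: the paper does not actually supply an argument for \eqref{crux} either. After displaying $\ab{M^{\bQ}}\le 2\ab{M^{\bP}}+2\ab{L}$ it writes that the conclusion ``will be reached if we can prove'' the $L$-estimate, tacitly treating $\eeq{\int_0^T(T-t)^{-1}\,d\ab{M^{\bP}}_t}<\infty$ as already available, even though hypothesis~(1) only controls the $\bP$-expectation of that quantity and the passage to $\bQ$ is exactly the change-of-measure obstacle you describe. Your diagnosis that this is the genuine difficulty is therefore accurate, and your Young/Fenchel heuristics correctly explain why it is not automatic (finite entropy together with $V\in\lone(\bP)$ does not force $V\in\lone(\bQ)$ without some exponential integrability of $V$, which boundedness of $\sigma$ alone does not provide). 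In short, your reduction matches the paper's, and the unproven step you isolate is a gap shared by the paper's own argument.
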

\begin{proof}
    With $\bP\sim\bQ$ as in the statement,
let $\theta$ be such that
the dynamics of  the density process $Z_t =
\eep{ \tRN{\bQ}{\bP} \giv \sF_t}$ is given by
\[ dZ_t = -Z_t\, \theta_t\, dW_t.\]
Let us show, first, that
\begin{align}
    \label{theta-l2}
\eeup{\bQ}{\int_0^T \theta^2_u\, du}<\infty.
\end{align}
Condition \eqref{equ:entr} above, together with the fact that the
function $x\mapsto x
\log(x)$ is convex and bounded from below, implies that the process
$Z\log(Z)$ is a
continuous $\bP$-submartingale on $[0,T]$. Hence, there exists a
finite constant
$C$ such that
\begin{align}
   \label{equ:tau-bound}
  \eep{ Z_{\tau} \log(Z_{\tau}) } \leq C
  \text{ for each $[0,T]$-valued stopping time $\tau$.}
\end{align}
\itos{} formula applied to the
semimartingale $Z \log(Z)$ yields
\begin{align*}
   Z_{\tau} \log(Z_{\tau}) = N_{\tau} + \tot \int_0^{\tau} Z_u
   \theta_u^2\, du \text{ where $N$ is a local $\bP$-martingale. }
   \end{align*}
Let  $\seq{\tau_n}$ is a sequence of stopping times that reduces the  process 
$N$. The upper bound of \eqref{equ:tau-bound} above
implies that
\begin{align*}
  C \geq \eep{Z_{\tau_n} \log(Z_{\tau_n})} = \tot \eep{\int_0^{\tau_n} Z_u \theta_u^2\, du}
  = \tot \eeq{ \int_0^{\tau_n} \theta_u^2\, du},
\end{align*}
and it remains to use the monotone-convergence theorem
to conclude that \eqref{theta-l2} holds.


\smallskip

We continue the proof by using  Girsanov's theorem and the
boundedness of $\sigma$ to conclude that the process:
\[ M'_t =
\int_0^t \sigma_u\, (dW_u + \theta_u\, du) =
M^{\bP}_t + \int_0^t \sigma_u\theta_u\, du \]
is a $\bQ$-martingale.
Boundedness of the process $\sigma$, together with \eqref{theta-l2}, implies
that the random variable $\xi = \MP_T = M'_T - \int_0^T \sigma_u \theta_u\, du$ is
$\bQ$-integrable, so that the $\bQ$-martingale $\MQ_t = \eeq{\xi
\giv \sF_t}$ is well defined. Moreover, we have
\begin{align*}
M^{\bQ}_t & =
\eeq{ M'_T - \int_0^T \sigma_u \theta_u\, du \giv \sF_t}
= M^{\bP}_t + \int_0^t \sigma_u \theta_u\, du - L_t,
\end{align*}
where
\[ L_t = \eeq{\int_0^T \sigma_u \theta_u\, du \giv \sF_t}.\]
It follows that
\begin{align*}
   \ab*{M^{\bQ}}_t = \ab*{M^{\bP}-L}_t .
\end{align*}
Furthermore, since
\begin{align*}
    \ab*{M^{\bP}-L}_t  \leq \ab*{M^{\bP} - L}_t +
    \ab*{M^{\bP}+L}_t=    2\ab*{\MP}_t + 2\ab*{L}_t,
 \end{align*}
 our final goal, namely \eqref{MQ-fin}, will be reached if we can
 prove that
   \begin{align}
     \label{Z-int}
     \eeq{\int_0^T \oo{T-t}\, d\ab{L}_t}<\infty.
   \end{align}
   For that, we note that $\xi' = L_T$ admits the absolutely continuous
   representation
     $L_T = \textstyle\int_0^T \beta'_t\, dt$, where $\beta'_t = \sigma_t
     \theta_t$,
   by its very definition.
   Since the boundedness of $\sigma$ and \eqref{theta-l2} above imply that
   $\beta' \in \lto(\bQ)$, equation \eqref{Z-int}
follows from the implication $1.\to 4.$ of Theorem
\ref{thm:main-strong}.
    \end{proof}
}

\section{Examples and further remarks}
\label{sec:examples}
\subsection{Functions of the terminal value of a Brownian motion {W}}
  { Our first subsection focuses on the  (lack of) absolutely continuous
     representation property for the random variables of the form
     $g(W_T)$.
     In contrast with
     what happens with the martingale representation, we show that
     such a  $\xi$ admits an absolutely continuous representation if and
     only if it is constant.
    Informally speaking:  in order to have regular-enough
representation, $\xi$ must be sufficiently   path
dependent.

  We start with a simple argument of limited scope just to provide
  some intuition.   Let  $\xi$
 be the terminal value $W_T$ itself, and let $\beta$ be its
 absolutely continuous representation, i.e., $\xi = \int_0^T
 \beta_u\, du$.
 Since the process $W^{\beta}_t = {W}_t - \int_0^t \beta_u\, du$
 satisfies $W^{\beta}_T = 0$, it cannot be a martingale
under any equivalent measure. This means that
$\beta$ cannot be too regular in the sense that
the stochastic
exponential $\exp*{ \int_0^{t} \beta_u\, dW_u - \tot \int_0^t \beta_u^2\,
du}$ cannot be a martingale,  if it is well-defined at all. In fact,
as our next Proposition shows, such a $\beta$ cannot be weakly
regular, i.e., $\int_0^T \beta_u^2\, du = +\infty$ with
positive probability. }

\begin{proposition}
   \label{pro:markovian}
    {Suppose that the filtration supports a Brownian motion
   $W$}, and that $\xi = g(W_T)$   for some $g\in C^2(\R)$.
Then $\sB^{2,0}(\xi) \not= \emptyset$ if and only if $g$ is constant.
\end{proposition}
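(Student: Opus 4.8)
The plan is to dispatch the easy direction first: if $g\equiv c$ is constant, then $\beta_t\equiv c/T$ is a bounded, hence $\bL^{2,0}$, process with $\int_0^T\beta_u\,du=c=g(W_T)$, so $\sB^{2,0}(\xi)\neq\emptyset$. For the converse I would argue by contradiction. Suppose $\beta\in\sB^{2,0}(g(W_T))$ and set $A_t=\int_0^t\beta_u\,du$, an adapted, absolutely continuous process with $A_T=g(W_T)$; since $\int_0^T\beta_u^2\,du<\infty$ a.s., the tail $I_t:=\int_t^T\beta_u^2\,du$ decreases to $0$ a.s.\ as $t\uparrow T$. By Cauchy--Schwarz $(g(W_T)-A_t)^2\le(T-t)I_t$, so the normalized approximation error $X_t:=(g(W_T)-A_t)/\sqrt{T-t}$ satisfies $\abs{X_t}\le\sqrt{I_t}\to 0$ a.s., and in particular $\bP(\abs{X_t}>\eps)\to 0$ for every $\eps>0$. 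The point of the proof is that if $g$ is non-constant then $X_t$ must retain a non-degenerate Gaussian component inherited from the $\sF_t$-independent increment $W_T-W_t$, which is incompatible with this convergence.

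To isolate that component, write $Z_t=(W_T-W_t)/\sqrt{T-t}$; because $W$ is a Brownian motion for the filtration $\bF$, $Z_t$ is $N(0,1)$ and independent of $\sF_t$. A second-order Taylor expansion of $g$ about $W_t$ gives
\[
X_t=\alpha_t+g'(W_t)\,Z_t+r_t,\qquad \alpha_t:=\frac{g(W_t)-A_t}{\sqrt{T-t}},\qquad r_t:=\tfrac12\,g''(\theta_t)\,\sqrt{T-t}\,Z_t^2,
\]
with $\theta_t$ between $W_t$ and $W_T$, and with $\alpha_t,g'(W_t)$ both $\sF_t$-measurable. I would first check $r_t\to0$ in probability: $\abs{\theta_t}\le\sup_{u\in[0,T]}\abs{W_u}<\infty$ a.s.\ makes $\abs{g''(\theta_t)}$ a.s.\ bounded, while $\sqrt{T-t}\,Z_t^2\to0$ in probability. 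Next, conditionally on $\sF_t$ the variable $\alpha_t+g'(W_t)Z_t$ is normal with variance $g'(W_t)^2$; picking an open interval $J$ on which $\abs{g'}\ge c>0$ (possible exactly because $g$ is non-constant and $g'$ is continuous), this conditional variance is at least $c^2$ on the $\sF_t$-event $\{W_t\in J\}$. Invoking the elementary bound that $\bP(\abs{Y}>2\eps)\ge2\bar\Phi(2\eps/c)$ for every $Y\sim N(\mu,\sigma^2)$ with $\sigma\ge c$, where $\bar\Phi(z)=\bP(N(0,1)>z)$ — valid because $\bP(\abs{Y}\le2\eps)$ is maximal at $\mu=0$, by symmetry and unimodality — and taking expectations over $\{W_t\in J\}$, one obtains
\[
\bP\big(\abs{\alpha_t+g'(W_t)Z_t}>2\eps\big)\ \ge\ 2\bar\Phi(2\eps/c)\,\bP(W_t\in J).
\]
Since $\abs{X_t}\ge\abs{\alpha_t+g'(W_t)Z_t}-\abs{r_t}$ and $\bP(W_t\in J)\to\bP(W_T\in J)>0$ as $t\uparrow T$ (path continuity of $W$ together with the absolute continuity of the law of $W_T$), combining the two displays with $r_t\to0$ in probability yields $\liminf_{t\uparrow T}\bP(\abs{X_t}>\eps)\ge 2\bar\Phi(2\eps/c)\,\bP(W_T\in J)>0$, contradicting $\bP(\abs{X_t}>\eps)\to0$. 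Hence $g'\equiv0$ and $g$ is constant.

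The main obstacle is the adaptedness bookkeeping, which is what forces the Taylor expansion rather than a bare mean-value argument: writing $g(W_T)-g(W_t)=g'(\eta_t)(W_T-W_t)$ with $\eta_t$ between $W_t$ and $W_T$ is useless here because $g'(\eta_t)$ is \emph{not} $\sF_t$-measurable and so cannot be frozen when conditioning on $\sF_t$. Replacing $\eta_t$ by $W_t$ costs the quadratic remainder $r_t$, whose negligibility then must be argued despite $g''$ being possibly unbounded (handled by the a.s.\ finiteness of $\sup_{u\in[0,T]}\abs{W_u}$) and despite $Z_t^2$ being unbounded (handled by the vanishing prefactor $\sqrt{T-t}$). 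The only other delicate point is that the Gaussian lower bound must be uniform over the \emph{random} mean $\alpha_t$, which is precisely what the extremality of $\bP(\abs{Y}\le 2\eps)$ at zero mean provides.
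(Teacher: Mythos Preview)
Your argument is correct and takes a genuinely different route from the paper. The paper first reduces, via It\^o's formula, to the representability of $\bar\xi=\int_0^T g'(W_t)\,dW_t$, then invokes the main weak-regularity characterization (Theorem~\ref{thm:main-weak}) to conclude that representability would force $\int_0^T (g'(W_t))^2/(T-t)\,dt<\infty$ a.s., hence $g'(W_T)=0$ a.s.; it also needs a separate conditional-expectation (projection) step to pass from a Brownian filtration to an arbitrary filtration that merely \emph{supports} $W$. Your proof is self-contained: the Cauchy--Schwarz bound $(g(W_T)-A_t)^2\le (T-t)\int_t^T\beta_u^2\,du$ extracts the key rate directly from $\beta\in\bL^{2,0}$, and the Taylor-plus-conditional-Gaussian argument then produces the contradiction without any appeal to Theorem~\ref{thm:main-weak}. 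A pleasant by-product is that your argument handles general filtrations in one pass, since the only structural facts used are that $Z_t=(W_T-W_t)/\sqrt{T-t}$ is standard normal and independent of $\sF_t$ (both immediate from $W$ being an $\bF$-Brownian motion) and that $A_t$ is $\sF_t$-measurable. The paper's approach, on the other hand, has the expository advantage of exhibiting Proposition~\ref{pro:markovian} as an application of its main theorem.
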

\begin{proof}
The only implication which requires a proof
is $ (\Rightarrow)$.

 {We focus, first, on the case where the
filtration is generated by the Brownian motion.}
Since $g \in C^2(\mathbb{R})$, the process $
g''(W)$  is locally bounded so that $\int_0^T (g''(W_u))^2\, du<\infty$, a.s.  It\^ o's formula then
implies that $\xi$ admits an absolutely continuous representation under
weak regularity, i.e.,
that $\sB^{2,0}(\xi) \ne \emptyset$, if and only if $\sB^{2,0}(\bar{\xi})
\ne \emptyset$, where
  \begin{align*}
    \bar{\xi} = \int_0^T g'(W_t)\, dW_t.
  \end{align*}

   {Fix a $\bQ \in \sP$ and call $Z_t= \ee{ \tfrac{d\bQ}{d\bP}
      \giv \sF_t}$ its density process. Since
    the filtration is assumed to be Brownian, there exists a progressively
    measurable process $\theta$ such that
   $ Z=\mathcal{E}( \int_0^\cdot \theta_t dW_t)$. Thus,
  $$ \bar{\xi} = \int_0^T g'(W_t)\, dW_t = \int_0^T g'(W_t)\, dW^Z_t +  \int_0^T g'(W_t)\theta_t \, dt $$
where $W^Z = W - \int_0^\cdot \theta_t\, dt$ is a $\bQ$-Brownian
motion. By the continuity of $g'$ and the paths of $W$,
we have $\int_0^T (g'(W_t) \theta_t)^2\, dt <\infty$,
a.s.}
Being deterministic, the quadratic variation of the Brownian motion
has the same distribution across all the probabilities in  $\sP$,
so, by  Theorem \ref{thm:main-weak},    $\bar \xi$ is representable if and only if:
\begin{align}
   \label{equ:g-prime-int}
  \int_0^T \frac{(g'(W_t))^2}{T-t}\, dt<\infty, \eas
\end{align}
Continuity of $g'$ and $W$ force $g'(W_t)\to 0$ when $t\rightarrow
T$, i.e., $g'(W_T)=0$, a.s., whenever
\eqref{equ:g-prime-int} holds. Since $W_T$ has full support under
any $\bP\in\sP$, we conclude that $g'\equiv 0$.\\
\indent  {The next step in the proof is to relax the assumption that the
filtration $\prf{\sF_t}$ is generated by a Brownian motion. In preparation, let
$\prf{\sF^W_t}$ denote the subfiltration of $\prf{\sF_t}$ generated by the Brownian motion
$W$, and let $\Prog$ and $\Prog^W$ denote the progressive
$\sigma$-algebras on $[0,T] \times \sF$ corresponding to
$\prf{\sF_t}$ and $\prf{\sF^W_t}$, respectively.

We argue by contradiction and assume that some $\xi = g(W_T)$, with
non constant $g$, can be
represented as $\int_0^T \beta_u\, du$ for some $\beta\in\Prog$ with $\int_0^T
\beta_u^2\, du < \infty$, a.s. With $\bQ$ denoting a probability
measure equivalent to $\bP$ with the property that $\eeqst{\int_0^T
\beta_u^2\, du}<\infty$, we observe that $\beta$, seen as a
$\Prog$-measurable function on the product space  $[0,T]\times
\Omega$, is square integrable with respect to the product probability
measure $\mu = \oo{T} \ld \otimes  {\bQ}$ (where $\ld$ denotes the Lebesgue
measure on $[0,T]$). Hence, the conditional expectation $\beta^W$ of
$\beta$, taken on
the probability space $([0,T]\times \Omega, \Prog, \mu)$, given the
$\sigma$-algebra $\Prog^W$, is well defined and satisfies
\begin{align*}
     \int_B \beta^W_u (\omega) \, d\mu(u, \omega) = \int_B \beta_u(\omega)\, d\mu(u, \omega) \eforall B\in \Prog^W.
\end{align*}
If we set $B = [0,T] \times A$, for $A\in \sF^W_T$ and remember that
$\xi \in \sF^W_T$, we obtain
immediately that $\int_0^T \beta^W_u\, du = \xi$, a.s.
Moreover, since the conditional expectation preserves square
integrability, we have
\begin{align*}
    \eeq{ \int_0^T (\beta^W_u)^2\, du} < \infty,
\end{align*}
and, consequently, $\int_0^T (\beta^W_u)^2\, du<\infty$, a.s. It
remains to observe that the existence of such $\beta^W$ contradicts
the conclusion of the first part of the proof, and completes the
argument.
}
\end{proof}

\subsection{Representations in $\bL^{p,1}$ for $p<2$}\label{sec: daprato-zab}
 {The ``factorization
formula'' of Da Prato and Zabczyk
when specialized to the case $S(t) = \Id$ and $U=H=\R$ (see Theorem 5.2.5,
p.~58 in \cite{DaPZab96} for the statement and the paragraph that precedes
it for the necessary definitions and notation) states the following:
whenever
$\alpha \in [0,1)$,
and $ {\Psi}$ is a progressively measurable process such that
\begin{align}
    \label{cond-fact}
    \int_0^t (t-s)^{\alpha-1} \prn{\int_0^s (s-r)^{-2\alpha} \ee{
    \Psi(r)^2}\, dr }^{1/2}\, ds < \infty \text{ a.s.,}
\end{align}
we have
\begin{align}
    \label{fact-formula}
    \int_0^t \Psi(s)\, dW_s = \frac{\sin(\alpha \pi)}{\pi} \int_0^t
    (t-s)^{\alpha-1} Y^{\Psi}_{\alpha}(s) \, ds \eas,  \eforall t\in [0,T],
\end{align}
where
\begin{align*}
    Y^{\Psi}_{\alpha}(s) = \int_0^t (s-r)^{-\alpha} \Psi(r)\, dW_r.
\end{align*}
To see how \eqref{fact-formula} above leads to an interesting absolutely
continuous representation, we pick a bounded progressively measurable process $\sigma$ and
apply the factorization formula \eqref{fact-formula} above with $t=T$ and $\Psi =
\sigma$. When $\alpha\in [0,1/2)$,  so that \eqref{cond-fact} is satisfied,
\eqref{fact-formula} yields directly the following
absolutely-continuous representation
for the last element $\xi=M_T$ of the martingale $M_t = \int_0^t
\sigma_u\, dW_u$:}
\begin{align}
   \label{equ:fact}
  \xi = \int_0^T \beta_t\, dt \ewhere
   {\beta_t =  \frac{\sin(\alpha \pi)}{\pi} (T-t)^{\alpha-1} R_t \eand R_t = \int_0^t
  (t-u)^{-\alpha}\, dM_u.}
\end{align}
Note that we can interpret  {$R_t$} as a
formal Riemann-Liouville fractional integral
of order $\alpha$ of the noise $dM$, up to a multiplicative constant.
To complete the factorization formula, we then integrate the result
multiplied by $(T-t)^{\alpha -1}$ with respect to $dt$, i.e., compute the
Riemann-Liouville integral of the complementary order $1-\alpha$ of the
result.
The semigroup property of Riemann-Liouville integration would
suggest that the result should coincide with the integral of order
$\alpha+(1-\alpha)=1$ of $dM$, i.e., it should yield $M_T=\xi$,   which  is precisely the case.
{We refer the reader to  \cite{SamKilMar93} for details
on fractional integration. }

Since no restriction other than boundedness is imposed on $\sigma$ it
may appear  {at first glance} that \eqref{equ:fact}
contradicts Proposition
\ref{pro:markovian} above,  {in that it provides a representation for $
\xi = W_T$, for example}. The difference, as in the discussion of the
stochastic Fubini theorem in Remark \ref{rem:Fubini} above, lies in the
regularity class.   {As an easy illustration, let us show that in the special case
$\sigma\equiv 1$, the representation $\beta$ given by \eqref{equ:fact}
does not even belong to the weak regularity class $\sB^{2,0}(\xi)$. In fact, we
show below that it satisfies $\int_0^T \beta_u^2\, du = +\infty$, a.s.
To do that, we focus on the process $R$ from \eqref{equ:fact}
which, in this case, takes the form
\begin{align}
    \label{RW}
 R_t = \int_0^t (t-s)^{-\alpha}\, dW_s.
\end{align}
Known as the \emph{Riemann-Liouville process} (up to a
multiplicative constant), the process $R$ from
\eqref{RW} above is
Gaussian and admits a
continuous modification (see \cite{MarRob99} for a survey
of its properties and its relation to the
fractional Brownian motion).
Its value $R_T$ at $T$ is normally
distributed with a nonzero variance, so that, by continuity,  we have $\lim_{t\to T}
R_t = R_T \ne 0$, a.s. It follows that there exists
a random variable $K$ such that $K>0$, a.s., and
\[ \abs{\beta_t} \geq K (T-t)^{\alpha-1}, \eas, \]
for all $t$ in a (random) neighborhood of $T$.
    Since $\alpha < 1/2$, i.e., $2\alpha -2 < -1$, we necessarily have $\int_0^T
    \beta_t^2\, dt = \infty$, a.s.
\\
\noindent On the other hand, as our next result states, the
factorization theorem allows us to construct absolutely continuous
representations of a wide variety of random variables
in any one of slightly less regular classes $\bL^{p,1}$,  $p \in [1,2)$.
}
 {
\begin{proposition}
    \label{lp}
Suppose that the filtration supports a Brownian motion $W$ and that
$\xi = \int_0^T \sigma_u\, dW_u$, with $\sigma$ bounded.
Then $\xi$ admits
an absolutely
continuous representation in $\lpo$ for each  $p\in [1,2)$.
\end{proposition}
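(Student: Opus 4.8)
The plan is to obtain the representation directly from the factorization formula \eqref{equ:fact}, choosing the exponent $\alpha\in(0,1/2)$ as a function of $p$. For every such $\alpha$ the boundedness of $\sigma$ guarantees \eqref{cond-fact}, so \eqref{fact-formula} applies with $t=T$ and $\Psi=\sigma$ and gives $\xi=\int_0^T\beta_t\,dt$ a.s., where, as in \eqref{equ:fact},
\[
   \beta_t=\frac{\sin(\alpha\pi)}{\pi}\,(T-t)^{\alpha-1}R_t,\qquad R_t=\int_0^t (t-u)^{-\alpha}\sigma_u\,dW_u .
\]
Taking $R$ in its continuous adapted modification makes $\beta$ predictable on $[0,T)$, so all that remains is to find, for each $p\in[1,2)$, an $\alpha\in(0,1/2)$ for which $\beta\in\lpo$, i.e.\ $\ee{\int_0^T|\beta_t|^p\,dt}<\infty$.

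First I would bound the moments of $R_t$. By \itos{} isometry and the boundedness of $\sigma$,
\[
   \ee{R_t^2}=\ee{\int_0^t (t-u)^{-2\alpha}\sigma_u^2\,du}\le \norm{\sigma}_\infty^{\,2}\int_0^t (t-u)^{-2\alpha}\,du=\frac{\norm{\sigma}_\infty^{\,2}}{1-2\alpha}\,t^{1-2\alpha},
\]
and since $p/2<1$, Jensen's inequality gives $\ee{|R_t|^p}\le(\ee{R_t^2})^{p/2}\le C\,t^{p(1-2\alpha)/2}$ with $C=C(p,\alpha,\norm{\sigma}_\infty)$. Tonelli's theorem then yields
\[
   \ee{\int_0^T|\beta_t|^p\,dt}=\Big(\tfrac{\sin(\alpha\pi)}{\pi}\Big)^{p}\int_0^T (T-t)^{p(\alpha-1)}\,\ee{|R_t|^p}\,dt\le C'\int_0^T (T-t)^{p(\alpha-1)}\,t^{p(1-2\alpha)/2}\,dt .
\]

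The last integral is purely deterministic, and it is here that the hypothesis $p<2$ is used. Near $t=0$ the exponent $p(1-2\alpha)/2$ is nonnegative (since $\alpha<1/2$), so there is no difficulty there; near $t=T$ the integral is finite precisely when $p(\alpha-1)>-1$, i.e.\ when $\alpha>1-\tfrac1p$. Thus both constraints $1-\tfrac1p<\alpha<\tfrac12$ can be satisfied simultaneously if and only if $1-\tfrac1p<\tfrac12$, that is $p<2$ (and for $p=1$ any $\alpha\in(0,1/2)$ works). Fixing such an $\alpha$, we conclude that $\beta\in\lpo$ and $\int_0^T\beta_t\,dt=\xi$ a.s., as claimed.

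I do not expect a serious obstacle: the factorization formula \eqref{fact-formula} does all the structural work and the only real constraint is the range of admissible $\alpha$, which is dictated entirely by the integrability of $(T-t)^{p(\alpha-1)}$ at the right endpoint. The one point requiring some care is the measurability of $\beta$: one should fix a continuous adapted modification of the stochastic convolution $R$ (its existence follows from a Kolmogorov continuity argument based on higher-moment versions of the bound above, as for the Riemann--Liouville process discussed around \eqref{RW}), since a continuous adapted process is automatically predictable and thus genuinely lies in $\lpo$.
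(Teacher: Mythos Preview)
Your proof is correct and follows essentially the same route as the paper: both use the factorization formula \eqref{equ:fact} and reduce the question to the finiteness of the deterministic integral $\int_0^T (T-t)^{p(\alpha-1)} t^{p(1/2-\alpha)}\,dt$, which forces $\alpha\in(1-1/p,1/2)$. The only difference is in the moment estimate for $R_t$: the paper passes through $\sup_{s\le t}|\int_0^s(t-u)^{-\alpha}\,dM_u|$ and applies the Burkholder--Davis--Gundy inequality, whereas you use the It\^o isometry for the second moment and then Jensen (valid because $p/2<1$) to descend to the $p$-th moment---a slightly more elementary argument that lands on exactly the same bound.
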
}
\begin{proof}
    Let $p\in [1,2)$ be given.
    We claim that that the representation \eqref{equ:fact} above, with $M_t =
    \int_0^t \sigma_u\, dW_u$, and with a
    suitably chosen $\alpha \in (0,1/2)$ belongs to $\lpo$. To see
    that we use Doob's maximal inequality followed by
the Burkholder-Davis-Gundy theorem to obtain the following:
\begin{align*}
  \ee{ \abs{\beta_t}^p } &= (T-t)^{p(\alpha-1)}
  \ee{ \abs{\int_0^t (t-u)^{-\alpha}\, dM_u}^p}\\
                         &  {\leq}  (T-t)^{p(\alpha-1)}
  \ee{ \sup_{s \leq t} \abs{\int_0^s (t-u)^{-\alpha}\, dM_u}^p}\\
  & \lessapprox (T-t)^{p(\alpha-1)} \ee{
     \prn{\int_0^t (t-u)^{-2  \alpha}\, d\ab{M}_u}^{p/2}}\\
\end{align*}
where $a \lessapprox b$ is a shorthand for   $a \leq C\,  b$, for some
constant $C>0$ which depends only on $p$.
Allowing $C$ to depend on  $\sigma$  as well, we can go on to
conclude that
\begin{equation}
   \label{equ:fin-inf}
   \begin{split}
   \ee{\int_0^T \abs{\beta_t}^p \, dt} & \leq
    {
      \int_0^T (T-t)^{p(\alpha-1)}  \ee{\prn{\int_0^t (t-u)^{-2\alpha }\, d\ab{M}_u}^{p/2}} dt
   }\\
   & \lessapprox
      \int_0^T (T-t)^{p(\alpha-1)}  \prn{\int_0^t (t-u)^{-2\alpha}\, du}^{p/2} \, dt
   \\
   & =
   \int_0^T (T-t)^{p(\alpha-1)}  \frac{t^{p(1/2-\alpha)}}{1-2\alpha}\, dt.
   \end{split}
\end{equation}
The last integral is finite if and only if
$\alpha \in (1-1/p,1/2)$. Therefore,
the choice $\alpha = \tfrac{3}{4} - \tfrac{1}{2p}$ in
\eqref{equ:fact} provides an absolutely continuous representation of
$\xi$ in $\bL^{p,1}$ for $p\in (1,2)$.
\end{proof}
\begin{remark}
\label{rem:Geoff}
The result of Proposition \ref{lp} above should be contrasted with  the findings in our Theorem \ref{thm:main-strong},
which implies that under the same assumptions, a representation of
$\xi = \int_0^T \sigma_u\, dW_u$ is possible in
$\lto$
if and only if, additionally,
$$\ee{\int_0^T \frac{\sigma_u^2}{T-t}\, dt}<\infty.$$
\end{remark}
\bibliographystyle{amsalpha} 
\bibliography{BiaZit23.bib}
\end{document}